\documentclass{amsproc}
%%%%%%%%%%%%%%%%%%%%%%%%%%%%%%%%%%%%%%%%%%%%%%%%%%%%%%%%%%%%%%%%%%%%%%%%%%%%%%%%%%%%%%%%%%%%%%%%%%%%%%%%%%%%%%%%%%%%%%%%%%%%%%%%%%%%%%%%%%%%%%%%%%%%%%%%%%%%%%%%%%%%%%%%%%%%%%%%%%%%%%%%%%%%%%%%%%%%%%%%%%%%%%%%%%%%%%%%%%%%%%%%%%%%%%%%%%%%%%%%%%%%%%%%%%%%
\usepackage{amsfonts}

\setcounter{MaxMatrixCols}{10}
%TCIDATA{OutputFilter=LATEX.DLL}
%TCIDATA{Version=5.50.0.2953}
%TCIDATA{<META NAME="SaveForMode" CONTENT="1">}
%TCIDATA{BibliographyScheme=Manual}
%TCIDATA{Created=Thursday, May 01, 2025 13:10:03}
%TCIDATA{LastRevised=Sunday, May 04, 2025 13:29:29}
%TCIDATA{<META NAME="GraphicsSave" CONTENT="32">}
%TCIDATA{<META NAME="DocumentShell" CONTENT="Articles\SW\AMS Proceedings Article">}
%TCIDATA{CSTFile=amsprtci.cst}

\theoremstyle{plain}

\newtheorem{corollary}{Corollary}

\newtheorem{definition}{Definition}

\newtheorem{proposition}{Proposition}
\newtheorem{remark}{Remark}

\newtheorem{theorem}{Theorem}
\numberwithin{equation}{section}

\begin{document}
\title{ Generalizations of Certain Classes of Single-Valued Mappings to
Multivalued Cases}
\author{Emirhan HACIOĞLU}
\address{Department of Mathematics, Trakya University, 22030 Edirne, T\~{A}%
%TCIMACRO{\U{bc}}%
%BeginExpansion
$\frac14$%
%EndExpansion
rkiye }
\email{emirhanhacioglu@trakya.edu.tr}
\subjclass[2000]{45J05, 47H10}
\keywords{Fxed point, triangular contraction, Kannan contraction, Chatterjea
contraction, multivaluerd mappings}
\thanks{This paper is in final form and no version of it will be submitted
for publication elsewhere.}

\begin{abstract}
In this study, multivalued generalizations of certain classes of
single-valued transformations defined on metric spaces are obtained.
Building upon recently introduced concepts such as mappings contracting
perimeters of triangles, new structural properties and fixed-point results
for the multivalued counterparts are explored. 
\end{abstract}

\maketitle

\section{Introduction}

Fixed point theory for contractive-type mappings remains a vibrant area of
research with broad applications in analysis, optimization, and applied
sciences. The classical Banach contraction\cite{banach} principle laid the
foundation, motivating multiple generalizations such as Kannan-type and
Chatterjea-type contractions \cite{kannan1968,chatterjea1972} Petrov \cite%
{petrov} investigated mappings contracting total pairwise distances of $n$
points, establishing periodic point results in complete metric spaces.
Popescu \cite{popescu} introduced three-point generalized orbital triangular
contractions, proving convergence under minimal continuity assumptions.
However, certain proof details and multivalued cases were left incomplete.

This paper merges and enhances these approaches. Our main contributions
include:

\begin{itemize}
\item Proposing a unified framework that encompasses perimeter-contracting,
total pairwise distance, and generalized orbital triangular contractions,
covering multivalued mappings.

\item Defining and give some existence theoresm for multivalued orbital
triangular contraction theorems .
\end{itemize}

\section{Preliminaries}

\label{sec:prelim} Let $(X,d)$ be a metric space. For nonempty closed
bounded sets $A,B \subset X$, the $\delta$-distance is defined by 
\begin{equation*}
\delta (A,B) = \sup_{a \in A,\, b \in B} d(a,b).
\end{equation*}

We denote by $CB(X)$ the family of all nonempty closed bounded subsets of $X$%
. \bigskip

\begin{definition}
\cite{petrov2} Let $(X,d)$ be a metric space with $|X|>3$. A mapping $%
T:X\rightarrow X$ is said to **contract the perimeters of triangles** if
there exists $\alpha \in \lbrack 0,1)$ such that for all distinct $x,y,z\in
X $, 
\begin{equation}
d(Tx,Ty)+d(Ty,Tz)+d(Tx,Tz)\leq \alpha \left( d(x,y)+d(y,z)+d(x,z)\right) .
\end{equation}
\end{definition}

\begin{remark}
\cite{petrov2} The requirement that $x,y,z$ be pairwise distinct is
essential. Without it, the definition becomes equivalent to contraction
mappings.
\end{remark}

\begin{proposition}
\cite{petrov2} Mappings that contract the perimeters of triangles are
continuous.
\end{proposition}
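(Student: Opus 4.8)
The plan is to prove continuity pointwise: fix $x_0\in X$ and show that $Tx_n\to Tx_0$ whenever $x_n\to x_0$. If $x_0$ is an isolated point of $X$ (in particular if $X$ is finite) this is immediate, since any sequence converging to $x_0$ is eventually equal to $x_0$; so I may assume $x_0$ is not isolated. The core of the argument will be the quantitative estimate
\[
d(Tx,Tx_0)\le 2\alpha\,d(x,x_0)\qquad\text{for every }x\in X\setminus\{x_0\},
\]
from which continuity at $x_0$ is obvious: given $x_n\to x_0$, split the sequence into the terms equal to $x_0$ (harmless, as $Tx_n=Tx_0$ there) and those different from $x_0$, to which the estimate applies, giving $d(Tx_n,Tx_0)\le 2\alpha\,d(x_n,x_0)\to 0$.

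To establish the estimate, fix $x\neq x_0$. Because $x_0$ is not isolated, choose a sequence $(z_n)$ in $X\setminus\{x_0\}$ with $z_n\to x_0$; since $z_n\to x_0\neq x$, after discarding finitely many indices we have $z_n\neq x$ as well, so the points $x,x_0,z_n$ are pairwise distinct and the hypothesis applies:
\[
d(Tx,Tx_0)+d(Tx_0,Tz_n)+d(Tx,Tz_n)\le\alpha\bigl(d(x,x_0)+d(x_0,z_n)+d(x,z_n)\bigr).
\]
Dropping the two nonnegative terms $d(Tx_0,Tz_n)$ and $d(Tx,Tz_n)$ on the left and letting $n\to\infty$ on the right --- where $d(x_0,z_n)\to 0$ and $d(x,z_n)\to d(x,x_0)$ by continuity of the metric --- yields exactly $d(Tx,Tx_0)\le 2\alpha\,d(x,x_0)$.

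The point that requires care --- the only real obstacle --- is that the contraction inequality is available only for triples of \emph{pairwise distinct} points, so it cannot be invoked directly for the pair $x,x_0$; the whole maneuver consists in producing a genuine, freely chosen third point $z_n$ near $x_0$ and then letting it collapse onto $x_0$. This is precisely where the standing hypothesis that $X$ has enough points (and the reduction to the non-isolated case) is used, and why one must discard the finitely many $z_n$ that happen to coincide with $x$. An alternative route that avoids the auxiliary sequence is a proof by contradiction: if $T$ were discontinuous at $x_0$ there would be $\varepsilon>0$ and pairwise distinct $x_n\to x_0$ with $d(Tx_n,Tx_0)\ge\varepsilon$; applying the hypothesis to the distinct triple $x_0,x_n,x_m$ with $n\neq m$ large forces $2\varepsilon\le\alpha\bigl(d(x_0,x_n)+d(x_n,x_m)+d(x_0,x_m)\bigr)\to 0$, a contradiction. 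I would present the direct estimate as the main argument, since it additionally records the useful local Lipschitz-type bound with constant $2\alpha$.
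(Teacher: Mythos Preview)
Your argument is correct. Note, however, that the present paper does not supply its own proof of this proposition: it is stated with a citation to \cite{petrov2} and no argument is given, so there is no in-paper proof to compare against.

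For what it is worth, your main route --- manufacturing an auxiliary third point $z_n\to x_0$ to sidestep the pairwise-distinctness restriction and then passing to the limit to extract the local estimate $d(Tx,Tx_0)\le 2\alpha\,d(x,x_0)$ --- is clean and yields slightly more than bare continuity (a Lipschitz-type bound at every non-isolated point). The alternative you sketch at the end, using two distinct terms $x_n,x_m$ of a putative bad sequence as the extra points and deriving a contradiction, is the version one more commonly encounters in the source literature on perimeter-contracting maps; it trades the isolated/non-isolated case split for a short subsequence argument. Either route is a complete proof.
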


\begin{theorem}
\cite{petrov2} Let $(X,d)$ be a complete metric space with $|X|>3$, and let $%
T:X\rightarrow X$ satisfy:

\begin{itemize}
\item[(i)] $T(T(x)) \neq x$ whenever $Tx \neq x$,

\item[(ii)] $T$ contracts the perimeters of triangles.
\end{itemize}

Then $T$ has a fixed point. Moreover, the number of fixed points is at most
two.
\end{theorem}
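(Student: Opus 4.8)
The plan is to run the classical Picard-iteration argument, the only subtlety being that hypothesis (ii) constrains perimeters of \emph{non-degenerate} triangles only, so hypothesis (i) must be used to keep the iterates in pairwise-distinct triples. First I would fix $x_{0}\in X$, set $x_{n+1}=Tx_{n}$, and dispose of the trivial case: if $x_{n}=x_{n+1}$ for some $n$ then $x_{n}$ is a fixed point, so I may assume $x_{n}\neq x_{n+1}$ for all $n$. The first real step is to check that, under this assumption, the triple $x_{n-1},x_{n},x_{n+1}$ is pairwise distinct for every $n\ge 1$: the adjacent pairs are distinct by assumption, and $x_{n-1}\neq x_{n+1}$ because hypothesis (i) applied to $x_{n-1}$ (noting $Tx_{n-1}=x_{n}\neq x_{n-1}$) gives $T^{2}x_{n-1}=x_{n+1}\neq x_{n-1}$.

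With that in hand I would introduce the perimeters $p_{n}:=d(x_{n},x_{n+1})+d(x_{n+1},x_{n+2})+d(x_{n},x_{n+2})$ and apply (ii) to the distinct triple $x_{n-1},x_{n},x_{n+1}$; since $T$ maps these to $x_{n},x_{n+1},x_{n+2}$, this yields $p_{n}\le \alpha p_{n-1}$, hence $p_{n}\le \alpha^{n}p_{0}\to 0$. From $d(x_{n},x_{n+1})\le p_{n}$ and $\alpha\in[0,1)$ the series $\sum_{n}d(x_{n},x_{n+1})$ converges, so $(x_{n})$ is Cauchy and, by completeness, converges to some $x^{*}$. Invoking continuity of $T$ (the Proposition on mappings contracting perimeters of triangles), $x_{n+1}=Tx_{n}\to Tx^{*}$, and since also $x_{n+1}\to x^{*}$, I conclude $Tx^{*}=x^{*}$.

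For the at-most-two claim I would argue by contradiction: if $u,v,w$ were three pairwise-distinct fixed points, then applying (ii) to them gives $d(u,v)+d(v,w)+d(u,w)\le \alpha\bigl(d(u,v)+d(v,w)+d(u,w)\bigr)$ with the common value strictly positive, which is absurd for $\alpha<1$. Hence $T$ has at most two fixed points.

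I expect the pairwise-distinctness step of the first paragraph to be the crux — it is the sole place hypothesis (i) enters, and without it the triangle $x_{n-1},x_{n},x_{n+1}$ could collapse and (ii) would be inapplicable. The remaining steps are routine: the geometric decay of $p_{n}$, the Cauchy estimate $d(x_{n},x_{m})\le \alpha^{n}p_{0}/(1-\alpha)$, and the continuity pass to the limit. I would also note the harmless degenerate subcases (for instance $p_{0}=0$ already forces $x_{0}=x_{1}$), so that the dichotomy ``some iterate is fixed, or every consecutive triple is non-degenerate'' is genuinely exhaustive.
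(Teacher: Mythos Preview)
The paper does not supply its own proof of this theorem; it is quoted verbatim as a cited result from \cite{petrov2}, with no proof environment following the statement. So there is nothing in the present paper to compare your argument against line by line.

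That said, your proposal is correct and is exactly the standard argument one expects (and indeed the one in Petrov's original article): Picard iteration, use (i) to guarantee that each consecutive triple $x_{n-1},x_{n},x_{n+1}$ is genuinely pairwise distinct, apply (ii) to obtain geometric decay of the perimeters $p_{n}$, conclude the sequence is Cauchy, and pass to the limit via the continuity proposition that the paper does state. Your ``at most two'' step is also the canonical one. The crux you identify---that (i) is needed solely to keep the triangles non-degenerate so that (ii) is applicable---is precisely the point of the hypothesis, and your handling of it is clean. Your approach also matches the template the paper uses in its own later proofs (e.g.\ the multivalued orbital triangular contraction theorem), so it is entirely in the spirit of the surrounding material.
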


\begin{remark}
\cite{petrov2} If $x^{\ast }$ is the limit of an iteration sequence and was
not fixed earlier, then $x^{\ast }$ is the unique fixed point.
\end{remark}

\begin{definition}
A mapping $T : X \to X$ on a metric space $(X,d)$ is called a \emph{%
contraction mapping} if there exists a constant $\alpha \in [0,1)$ such that
for all $x, y \in X$, 
\begin{equation}
d(Tx, Ty) \leq \alpha\, d(x, y).
\end{equation}
\end{definition}

\section{Multivalued Generalization of Mappings Contracting Total Pairwise
Distances}

\label{sec:prelim}

Let $(X,d)$ be a metric space with $|X|\geq 2$ and $n\geq 2$. Define the
total pairwise distance: 
\begin{equation*}
S(x_{1},\dotsc ,x_{n})=\sum_{1\leq i<j\leq n}d(x_{i},x_{j}).
\end{equation*}

\begin{definition}
\cite{petrov} A mapping $T:X\rightarrow X$ multivalued contracting the total
pairwise distances between $n$ points if there exists $\alpha \in [0,1)$
such that for all pairwise distinct $x_{1},\dotsc ,x_{n}\in X$, 
\begin{equation*}
S(Tx_{1},\dotsc ,Tx_{n})\leq \alpha S(x_{1},\dotsc ,x_{n}).
\end{equation*}
\end{definition}

Let $X_{i}\subseteq CB(X)$ for all $i=1,2,\ldots,n$. We define the total
pairwise distance on $CB(X)$ as follows: 
\begin{equation*}
S(X_{1},\dotsc ,X_{n})=\sum_{1\leq i<j\leq n}\delta (X_{i},X_{j}).
\end{equation*}

\begin{definition}
A mapping $T:X\rightarrow CB(X)$ is multivalued contracting the total
pairwise distances between $n$ points if there exists $\alpha \in [0,1)$
such that 
\begin{equation*}
S(Tx_{1},\dotsc ,Tx_{n})\leq \alpha S(x_{1},\dotsc ,x_{n})
\end{equation*}
for all pairwise distinct $x_{1},\dotsc ,x_{n}\in X$.
\end{definition}

\begin{proposition}
If $T$ satisfies the above inequality for any $n$ points with $%
|\{x_{1},\dotsc ,x_{n}\}|=k$ where $2\leq k\leq n-1$, then $T$ contracts the
total pairwise distances between $k$ points.
\end{proposition}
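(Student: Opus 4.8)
The plan is to derive the $k$-point contraction inequality from the hypothesised $n$-point one by \emph{padding} a $k$-tuple of distinct points up to an $n$-tuple whose entries realize exactly $k$ values, and then \emph{averaging} over all relabellings of those $k$ values so that the coefficients multiplying the individual pairwise distances become uniform. The single-tuple version does not suffice, because one padded $n$-tuple only produces a weighted combination of the $d(y_a,y_b)$; the symmetrization is what homogenizes the weights.

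First I would fix pairwise distinct $y_{1},\dots ,y_{k}\in X$ and choose a composition $\mu =(\mu _{1},\dots ,\mu _{k})$ of $n$ into $k$ strictly positive parts, which is available (with nontrivial padding) precisely because $k\le n-1$; e.g.\ $\mu =(n-k+1,1,\dots ,1)$. For each permutation $\pi $ of $\{1,\dots ,k\}$, let $u^{\pi }$ be the $n$-tuple in which $y_{j}$ is repeated $\mu _{\pi (j)}$ times. Since the $y_{j}$ are distinct and each $\mu _{\pi (j)}\ge 1$, the tuple $u^{\pi }$ has exactly $k$ distinct entries, so the hypothesis gives $S(Tu^{\pi })\le \alpha \,S(u^{\pi })$. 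Grouping positions according to which $y_{j}$ they carry, I would record
\[
S(u^{\pi })=\sum_{1\le a<b\le k}\mu _{\pi (a)}\mu _{\pi (b)}\,d(y_{a},y_{b}),
\]
\[
S(Tu^{\pi })=\sum_{1\le a<b\le k}\mu _{\pi (a)}\mu _{\pi (b)}\,\delta (Ty_{a},Ty_{b})+\sum_{j=1}^{k}\binom{\mu _{\pi (j)}}{2}\,\delta (Ty_{j},Ty_{j}),
\]
where the last sum is the contribution of repeated entries; it vanishes in the single-valued case, and in general equals $\sum_{j}\binom{\mu _{\pi (j)}}{2}\,\mathrm{diam}(Ty_{j})\ge 0$.

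Next I would sum $S(Tu^{\pi })\le \alpha \,S(u^{\pi })$ over all $k!$ permutations $\pi $. The decisive observation is that for fixed $a\ne b$ the quantity $\sum_{\pi }\mu _{\pi (a)}\mu _{\pi (b)}$ is independent of the pair $\{a,b\}$: each ordered pair $(\mu _{s},\mu _{t})$ with $s\ne t$ is produced by exactly $(k-2)!$ permutations, so
\[
\sum_{\pi }\mu _{\pi (a)}\mu _{\pi (b)}=(k-2)!\sum_{s\ne t}\mu _{s}\mu _{t}=(k-2)!\Bigl(n^{2}-\sum_{s=1}^{k}\mu _{s}^{2}\Bigr)=:C>0,
\]
positivity following because $k\ge 2$ forces at least two parts $\mu _{s}$ to be nonzero. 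Likewise $\sum_{\pi }\binom{\mu _{\pi (j)}}{2}=(k-1)!\sum_{s}\binom{\mu _{s}}{2}=:D\ge 0$ is independent of $j$. Summation then collapses everything to
\[
C\,S(Ty_{1},\dots ,Ty_{k})+D\sum_{j=1}^{k}\delta (Ty_{j},Ty_{j})\le \alpha \,C\,S(y_{1},\dots ,y_{k}),
\]
and discarding the nonnegative term and dividing by $C>0$ yields $S(Ty_{1},\dots ,Ty_{k})\le \alpha \,S(y_{1},\dots ,y_{k})$, i.e.\ $T$ contracts the total pairwise distances between $k$ points, with the same $\alpha $.

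I expect the main obstacle to be conceptual: realizing that one must symmetrize over all $k!$ relabellings and that the identity $\sum_{\pi }\mu _{\pi (a)}\mu _{\pi (b)}=C$ being pair-independent is exactly what makes the reduction go through. A secondary, multivalued-specific point is the diagonal terms $\delta (Ty_{j},Ty_{j})=\mathrm{diam}(Ty_{j})$ created by repeated entries; the key is that they appear on the image side with a nonnegative coefficient, so dropping them only strengthens the inequality rather than obstructing it. Everything else is routine bookkeeping with finite sums.
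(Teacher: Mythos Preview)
Your argument is correct and is essentially the paper's approach: the paper chooses precisely the composition $\mu=(n-k+1,1,\dots,1)$ and sums the padded inequality over the $k$ choices of which point is repeated, which coincides with your permutation average up to the harmless redundancy factor $(k-1)!$. If anything you are more careful than the paper, since you track the diagonal contributions $\delta(Ty_{j},Ty_{j})=\mathrm{diam}(Ty_{j})$ and discard them as nonnegative, whereas the paper tacitly (and in the multivalued setting inaccurately) asserts $S(Tx_{i},\dots,Tx_{i})=0$.
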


\begin{proof}
Let $\{x_{1},x_{2},\ldots ,x_{k}\}$ be a set of pairwise distinct points
from $X$. Consider $k$ sets $A_{i}=\{x_{1},x_{2},\ldots
,x_{k},x_{k+1},\ldots ,x_{n}\}$ consisting of $n$ points such that $%
x_{k+1}=\cdots =x_{n}=x_{i}$, for $i=1,\ldots ,k$, i.e., the element $x_{i}$
occurs $n-k+1$ times in the set $A_{i}$. Then, by the supposition, the
inequalities 
\begin{equation*}
S(Tx_{1},Tx_{2},\ldots ,Tx_{k},Tx_{i},\ldots ,Tx_{i})\leq \alpha
S(x_{1},x_{2},\ldots ,x_{k},x_{i},\ldots ,x_{i})
\end{equation*}
hold for all $i=1,\ldots ,k$. Summing both sides over $i$, we get: 
\begin{equation*}
\sum_{i=1}^{k}S(Tx_{1},\ldots ,Tx_{k},Tx_{i},\ldots ,Tx_{i})\leq \alpha
\sum_{i=1}^{k}S(x_{1},\ldots ,x_{k},x_{i},\ldots ,x_{i}).
\end{equation*}
\end{proof}

Note that 
\begin{equation*}
S(Tx_{i},\ldots ,Tx_{i})=S(x_{i},\ldots ,x_{i})=0,
\end{equation*}
and 
\begin{equation*}
\sum_{i=1}^{k}\sum_{j=1}^{k}\delta (Tx_{j},Tx_{i})=2S(Tx_{1},\ldots
,Tx_{k}),\quad \sum_{i=1}^{k}\sum_{j=1}^{k}d(x_{j},x_{i})=2S(x_{1},\ldots
,x_{k}).
\end{equation*}

So, we obtain: 
\begin{equation*}
(k+2(n-k))S(Tx_{1},\ldots ,Tx_{k})\leq \alpha (k+2(n-k))S(x_{1},\ldots
,x_{k}).
\end{equation*}

Dividing both sides by $k+2(n-k)$, we conclude: 
\begin{equation*}
S(Tx_{1},\ldots ,Tx_{k})\leq \alpha S(x_{1},\ldots ,x_{k}).
\end{equation*}

Thus, $T$ is a mapping contracting the total pairwise distances between $k$
points.

\begin{proposition}
If $T$ is a multivalued mapping contracting the total pairwise distances
between $m$ points, $m\geq 2$, then $T$ also contracts the total pairwise
distances between $n$ points for all $n>m$.
\end{proposition}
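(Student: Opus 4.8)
The plan is to deduce the $n$-point inequality from the $m$-point inequality by an averaging argument over all $m$-element sub-tuples, using the elementary fact that any sub-collection of pairwise distinct points is again pairwise distinct, so the hypothesis applies to each such sub-collection verbatim. Note that, unlike in the preceding proposition, no repeated points are ever introduced, so the argument does not rely on any claim about $\delta(A,A)$.

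First I would fix $n>m$ and pairwise distinct points $x_{1},\dots ,x_{n}\in X$. For every $m$-element subset $I\subseteq \{1,\dots ,n\}$ the points $\{x_{i}:i\in I\}$ are pairwise distinct, so by the assumed $m$-point contraction property,
\begin{equation*}
\sum_{\substack{i,j\in I\\ i<j}}\delta (Tx_{i},Tx_{j})\leq \alpha \sum_{\substack{i,j\in I\\ i<j}}d(x_{i},x_{j}).
\end{equation*}
The next step is to sum this inequality over all $\binom{n}{m}$ subsets $I$ of size $m$.

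The key combinatorial observation is that a fixed unordered pair $\{i,j\}\subseteq \{1,\dots ,n\}$ lies in exactly $\binom{n-2}{m-2}$ of the $m$-element subsets $I$ (choose the remaining $m-2$ indices among the other $n-2$). Consequently,
\begin{equation*}
\sum_{|I|=m}\ \sum_{\substack{i,j\in I\\ i<j}}\delta (Tx_{i},Tx_{j})=\binom{n-2}{m-2}\,S(Tx_{1},\dots ,Tx_{n}),
\end{equation*}
and the analogous identity holds on the right-hand side with $d(x_{i},x_{j})$ in place of $\delta (Tx_{i},Tx_{j})$ and $S(x_{1},\dots ,x_{n})$ in place of $S(Tx_{1},\dots ,Tx_{n})$. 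Since $n>m\geq 2$ we have $n-2\geq m-2\geq 0$, hence $\binom{n-2}{m-2}\geq 1$; dividing the summed inequality through by this positive integer gives $S(Tx_{1},\dots ,Tx_{n})\leq \alpha \,S(x_{1},\dots ,x_{n})$ with the same constant $\alpha$, which is exactly the assertion.

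The only step that needs genuine care is the counting identity — verifying that each pair is covered the same number of times on both sides — after which the argument is purely arithmetic and $\alpha$ is preserved unchanged. (One could alternatively proceed by induction on $n$, handling the step $n\mapsto n+1$ by averaging over the $n+1$ sub-tuples of length $n$; the direct averaging above is more economical and yields all $n>m$ at once.)
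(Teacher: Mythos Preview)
Your argument is correct. The averaging over all $m$-element subsets, together with the double-counting identity that each unordered pair $\{i,j\}$ lies in exactly $\binom{n-2}{m-2}$ such subsets, is precisely the right mechanism, and the division step is legitimate because $\binom{n-2}{m-2}\geq 1$ for $n>m\geq 2$.

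The paper itself does not give a proof at all: it simply writes ``Similar steps are given in \cite{petrov}'' and moves on. Your write-up therefore supplies what the paper omits. It is worth remarking that your approach is cleaner than a naive adaptation of the paper's proof of the \emph{preceding} proposition (going from $n$ down to $k<n$), which works by \emph{repeating} points and then exploiting cancellations; here, going from $m$ up to $n>m$, you never introduce repeated points, so you avoid any appeal to the behaviour of $\delta$ on coincident arguments. The subset-averaging argument you give is the standard one and is almost certainly what the cited reference contains.
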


\begin{proof}
Similar steps are given in \cite{petrov}.
\end{proof}

\begin{theorem}
Let $(X,d)$ be a complete metric space with $|X|\geq n$, and $T:X\rightarrow
CB(X)$ a multivalued mapping contracting the total pairwise distances
between $n$ points. Then $T$ has a periodic point of prime period $k\in
\{1,\dotsc ,n-1\}$. % The number of periodic points is at most $n-1$.
\end{theorem}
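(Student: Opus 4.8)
The plan is to adapt Petrov's single-valued argument to the $\delta$-distance setting. The starting point is to iterate $T$ from an initial $n$-point configuration and track the total pairwise $\delta$-distance. First I would pick $n$ pairwise distinct points $x_1^{(0)},\dots,x_n^{(0)}\in X$ (possible since $|X|\ge n$) and, for each step $m$, choose selections $x_i^{(m+1)}\in Tx_i^{(m)}$. The key difficulty is that the contraction hypothesis is stated only for \emph{pairwise distinct} tuples, so if two of the iterates coincide the inequality $S(Tx_1,\dots,Tx_n)\le\alpha S(x_1,\dots,x_n)$ cannot be applied directly. I would handle this exactly as in the Proposition above: if at stage $m$ the iterates collapse to $k$ distinct values with $2\le k\le n-1$, then by that Proposition $T$ contracts total pairwise distances between $k$ points, and one can pass to a $k$-point subproblem; if they collapse to a single point $p$, then $p\in Tp$ (a fixed point, prime period $1$) and we are done. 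So the interesting case is when all iterates stay pairwise distinct for all $m$.

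In that case I would argue that $S(x_1^{(m)},\dots,x_n^{(m)})\le\alpha^m S(x_1^{(0)},\dots,x_n^{(0)})\to 0$. The point is to convert smallness of the \emph{sum} of pairwise $\delta$-distances into a Cauchy-type conclusion. Since $d(x_i^{(m)},x_j^{(m)})\le\delta(Tx_i^{(m-1)},Tx_j^{(m-1)})$ for the chosen selections, and more importantly $\operatorname{diam}\big(\bigcup_i Tx_i^{(m)}\big)\le$ (a bounded multiple of) $S(Tx_1^{(m)},\dots,Tx_n^{(m)})$, the sets $Tx_i^{(m)}$ shrink geometrically and their "positions" converge. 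Concretely, I would show $\delta(Tx_i^{(m)},Tx_i^{(m+1)})$ is summable in $m$ (using that $\delta(A,B)\le\operatorname{diam}(A\cup B)$ together with the geometric decay of $S$), so each selection sequence $(x_i^{(m)})_m$ is Cauchy; by completeness it converges to some $x_i^\ast$, and by the $\delta$-estimates all the limits coincide at a single point $x^\ast$ with $\delta(Tx^\ast,\{x^\ast\})=0$, i.e. $Tx^\ast=\{x^\ast\}$, giving a fixed point of prime period $1$, hence $k=1\in\{1,\dots,n-1\}$.

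To actually produce a \emph{periodic} point of some prime period $k$ (rather than only fixed points), I would organize the argument as a finite induction on $n$: the base case $n=2$ is essentially the multivalued Banach contraction (giving a fixed point), and for the inductive step either the iterates eventually land in a proper subset of size $k\le n-1$ — in which case the inductive hypothesis applied to the $k$-point contraction (Proposition) yields a periodic point of prime period in $\{1,\dots,k-1\}\subseteq\{1,\dots,n-1\}$, or perhaps of period $k$ along that orbit — or the iterates remain generic and the previous paragraph produces a fixed point. The main obstacle I anticipate is the bookkeeping in this case split: making precise how a collapsing orbit feeds into the lower-dimensional contraction while controlling the \emph{prime} period so that it genuinely lands in $\{1,\dots,n-1\}$, and ensuring the selection-dependent sequences in the multivalued setting remain consistent (one must choose the $x_i^{(m+1)}\in Tx_i^{(m)}$ coherently, e.g. realizing the relevant $\delta$-suprema or using that $\delta$ already dominates all selection distances). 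Once the orbit-collapse combinatorics is pinned down, the metric estimates are routine geometric-series bookkeeping with $\delta$ in place of $d$.
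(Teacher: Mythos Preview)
Your plan diverges from the paper's and contains a real gap in the ``generic'' case. The paper does not run $n$ parallel orbits; it argues by contradiction, assuming $T$ has \emph{no} periodic point of any prime period $k\le n-1$, and then builds a \emph{single} Picard sequence $x_0,x_1,x_2,\dots$ with $x_{i+1}\in Tx_i$. The no-periodicity hypothesis forces every block of $n$ consecutive terms $x_i,x_{i+1},\dots,x_{i+n-1}$ to be pairwise distinct, so the contraction inequality applies to that sliding window. Since $d(x_i,x_{i+1})$ is one of the summands of $S(x_i,\dots,x_{i+n-1})$, one obtains $d(x_i,x_{i+1})\le\alpha^{i}p$ directly, hence $(x_i)$ is Cauchy; the limit $x_\ast$ is then shown to satisfy $x_\ast\in Tx_\ast$, contradicting the assumption. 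All of the ``collapse'' bookkeeping and the induction on $n$ that you anticipate are absorbed into this single contradiction hypothesis and never arise.

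In your scheme the geometrically decaying quantity is $S\bigl(x_1^{(m)},\dots,x_n^{(m)}\bigr)$, which bounds only the \emph{cross} distances $d\bigl(x_i^{(m)},x_j^{(m)}\bigr)$ for $i\neq j$ at a fixed time $m$. It gives no control whatsoever over the step size $d\bigl(x_i^{(m)},x_i^{(m+1)}\bigr)$ along an individual orbit, and your proposed route via $\delta\bigl(Tx_i^{(m)},Tx_i^{(m+1)}\bigr)\le\operatorname{diam}\bigl(Tx_i^{(m)}\cup Tx_i^{(m+1)}\bigr)$ does not close the gap: the two sets $Tx_i^{(m)}$ and $Tx_i^{(m+1)}$ never appear together in any $n$-tuple to which the hypothesis applies, so nothing in $S$ bounds that diameter. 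A priori the cluster $\{x_1^{(m)},\dots,x_n^{(m)}\}$ could shrink while drifting, and no Cauchy conclusion follows. This is not bookkeeping but a missing estimate; the single-orbit sliding-window device is precisely what supplies it.
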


\begin{proof}
Suppose $T$ has no periodic point of prime period $k\in \{1,\dotsc ,n-1\}$.
Then $T$ has no fixed points. Let $x_{0}\in X$, and $x_{i+1}\in Tx_{i}$ for
all $i\geq 0$. Since $x_{i}\notin Tx_{i}$, then $x_{i}\neq x_{i+1}\in Tx_{i}$%
.

For $n=2$ or $n=3$:

Since $T$ has no periodic points of prime period $2$ and no fixed points, we
conclude that the points $x_{i},x_{i+1},x_{i+2}$ are pairwise distinct.

For $n\geq 4$:

The absence of periodic points of prime periods $2$ and $3$ implies that $%
x_{i},x_{i+1},x_{i+2},x_{i+3}$ are also pairwise distinct.

Repeating this reasoning $n - 4$ more times, we deduce that the points 
\begin{equation*}
x_i, x_{i+1}, \ldots, x_{i+n-1}
\end{equation*}
are pairwise distinct for every $i \geq 0$.

Now, letting $p=S(x_{0},x_{1},\dots ,x_{n-1})$, we have 
\begin{eqnarray*}
d(x_{n},x_{n+1}) &\leq &S(x_{n},x_{n+1},\dots ,x_{2n-1})=\sum_{n\leq i<j\leq
2n-1}d(x_{i},x_{j}) \\
&\leq &\sum_{n\leq i<j\leq 2n-1}\delta (Tx_{i-1},Tx_{j-1}) \\
&=&S(Tx_{n-1},Tx_{n},\dots ,Tx_{2n-2}) \\
&\leq &\alpha S(x_{n-1},x_{n},\dots ,x_{2n-2}) \\
&&\vdots \\
&\leq &\alpha ^{n}S(x_{0},x_{1},\dots ,x_{n-1}) \\
&=&\alpha ^{n}p.
\end{eqnarray*}
\end{proof}

Using the triangle inequality: for any positive integers $m,n$ with $m > n$,
we have 
\begin{eqnarray*}
d(x_{m},x_{n}) &\leq& \sum\limits_{k=n}^{m} d(x_{k},x_{k+1}) \\
&\leq& \sum\limits_{k=n}^{m} \alpha^{k} p \\
&=& \alpha^{n} \sum\limits_{k=n}^{\infty} \alpha^{k-n} p \\
&=& \alpha^{n} \frac{1}{1-\alpha} p
\end{eqnarray*}

which implies that $\left\{ x_{n} \right\}_{n \geq 0}$ is a Cauchy sequence
and hence converges in the complete metric space $X$. Let us denote $\lim
x_{n} = x_{\ast}$. Then we have, by the triangle inequality, 
\begin{eqnarray*}
d(x_{n}, T x_{\ast}) &\leq& \delta(T x_{n-1}, T x_{\ast}) \\
&\leq& S(T x_{\ast}, T x_{n-1}, T x_{n-2}, \dots, T x_{2n-1}) \\
&\leq& \alpha S(x_{\ast}, x_{n-1}, x_{n-2}, \dots, x_{2n-1}) \\
&=& \alpha \sum_{n-1 \leq i < j \leq 2n-1}^{k} d(x_i, x_j) + \alpha
\sum_{n-1 \leq i \leq 2n-1}^{k} d(x_i, x_{\ast})
\end{eqnarray*}

which implies $x_{\ast} \in T x_{\ast}$ and contradicts our assumption.

\begin{definition}
A mapping $T: X \rightarrow X$ multivalued contracting perimeters of
triangles if there exists $\alpha \in [0,1)$ such that for all distinct $x,
y, z \in X$, 
\begin{equation*}
\delta(Tx, Ty) + \delta(Ty, Tz) + \delta(Tx, Tz) \leq \alpha (d(x, y) + d(y,
z) + d(x, z)).
\end{equation*}
\end{definition}

\begin{corollary}
\cite{petrov} Let $(X, d)$ be a complete metric space with $|X| \geq n$, and 
$T: X \rightarrow X$ a mapping contracting the total pairwise distances
between $n$ points. Then $T$ has a periodic point of prime period $k \in
\{1, \dotsc, n-1\}$.
\end{corollary}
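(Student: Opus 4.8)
The plan is to deduce this Corollary from the multivalued periodic point theorem proved above by viewing a single-valued map as a particularly simple multivalued one. First I would associate to $T:X\to X$ the multivalued mapping $\widetilde{T}:X\to CB(X)$ defined by $\widetilde{T}x=\{Tx\}$; since singletons are nonempty, closed, and bounded, this is well defined. The key elementary observation is that for singletons one has $\delta(\{a\},\{b\})=d(a,b)$, so that for any points $x_{1},\dots,x_{n}$,
\[
S(\widetilde{T}x_{1},\dots,\widetilde{T}x_{n})=\sum_{1\le i<j\le n}\delta(\{Tx_{i}\},\{Tx_{j}\})=\sum_{1\le i<j\le n}d(Tx_{i},Tx_{j})=S(Tx_{1},\dots,Tx_{n}).
\]
Hence the hypothesis that $T$ contracts the total pairwise distances between $n$ points with constant $\alpha$ is literally the statement that $\widetilde{T}$ is a multivalued mapping contracting the total pairwise distances between $n$ points with the same $\alpha$.

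Next I would invoke the preceding Theorem applied to $\widetilde{T}$ on the complete metric space $(X,d)$ with $|X|\ge n$: it yields a point $x^{\ast}\in X$ that is periodic of prime period $k\in\{1,\dots,n-1\}$ for $\widetilde{T}$. The last step is to translate this back. Since $\widetilde{T}$ takes singleton values, an easy induction gives $\widetilde{T}^{\,m}x=\{T^{m}x\}$ for every $m\ge 1$ and every $x\in X$; in particular $x^{\ast}\in\widetilde{T}^{\,k}x^{\ast}=\{T^{k}x^{\ast}\}$ forces $T^{k}x^{\ast}=x^{\ast}$, and the minimality of $k$ for $\widetilde{T}$ is exactly the minimality of $k$ for $T$. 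Thus $x^{\ast}$ is a periodic point of $T$ of prime period $k\in\{1,\dots,n-1\}$.

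The argument is essentially bookkeeping, so I do not expect a genuine obstacle; the only point requiring a moment's care is checking that the notion of ``periodic point of prime period $k$'' transfers faithfully across the identification $T\leftrightarrow\widetilde{T}$, which reduces to the identity $\widetilde{T}^{\,m}x=\{T^{m}x\}$ noted above. Alternatively, one could repeat verbatim the proof of the Theorem with $\delta$ replaced by $d$ throughout and with the inclusions $x_{i+1}\in Tx_{i}$ replaced by the equalities $x_{i+1}=Tx_{i}$; this gives a self-contained proof at the cost of redundancy, and I would relegate it to a remark.
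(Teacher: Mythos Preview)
Your proposal is correct and is exactly the approach the paper implicitly takes: the Corollary is listed immediately after the multivalued theorem with no separate proof, so it is meant to follow by specializing to the singleton-valued $\widetilde{T}x=\{Tx\}$, precisely as you argue. The only thing the paper leaves implicit that you spell out is the identification $\delta(\{a\},\{b\})=d(a,b)$ and the transfer of periodicity via $\widetilde{T}^{m}x=\{T^{m}x\}$, both of which are routine.
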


\begin{corollary}
\cite{petrov2} If $T$ contracts perimeters of triangles, then $T$ has a
fixed point if and only if there are no periodic points of period 2.
\end{corollary}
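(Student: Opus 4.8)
The plan is to deduce the statement from the fixed-point theorem of \cite{petrov2} quoted above, by observing that its hypothesis (i), namely $T(Tx)\neq x$ whenever $Tx\neq x$, is \emph{exactly} the assertion that $T$ has no periodic point of prime period $2$ (a point $x$ with $T^{2}x=x$ but $Tx\neq x$). With this identification in hand the equivalence splits into its two implications, and the standing assumption ``$T$ contracts perimeters of triangles'' supplies hypothesis (ii) throughout. The ambient requirement $|X|>3$ coming with the definition of contracting perimeters of triangles is precisely what licenses invoking that theorem.

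For the ``if'' direction I would argue as follows. Assume $T$ has no periodic point of period $2$; then hypothesis (i) holds, hypothesis (ii) is given, and the theorem of \cite{petrov2} directly produces a fixed point of $T$. Nothing further is needed here.

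For the ``only if'' direction, suppose $T$ has a fixed point $x^{\ast}$ and, for contradiction, that $a\in X$ is a periodic point of prime period $2$; set $b:=Ta$, so $b\neq a$ and $Tb=a$. First I would check that $x^{\ast},a,b$ are pairwise distinct: $a\neq b$ by the choice of $a$; if $a=x^{\ast}$ then $b=Ta=Tx^{\ast}=x^{\ast}=a$, impossible, so $a\neq x^{\ast}$; and if $b=x^{\ast}$ then $a=Tb=Tx^{\ast}=x^{\ast}=b$, impossible, so $b\neq x^{\ast}$. Now apply the perimeter-contraction inequality to the triple $x^{\ast},a,b$:
\[
d(Tx^{\ast},Ta)+d(Ta,Tb)+d(Tx^{\ast},Tb)\le \alpha\bigl(d(x^{\ast},a)+d(a,b)+d(x^{\ast},b)\bigr).
\]
Since $T$ maps $\{x^{\ast},a,b\}$ onto itself ($x^{\ast}\mapsto x^{\ast}$, $a\mapsto b$, $b\mapsto a$), the left-hand side equals $d(x^{\ast},b)+d(b,a)+d(x^{\ast},a)$, i.e. the very same perimeter $P:=d(x^{\ast},a)+d(a,b)+d(x^{\ast},b)$ that appears on the right. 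Hence $P\le\alpha P$ with $P>0$ (the three points being distinct), contradicting $\alpha\in[0,1)$. Therefore no such $a$ exists, which is the desired conclusion.

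The argument is essentially bookkeeping, so I do not expect a genuine obstacle; the one point that must be got right is the observation that a fixed point together with a $2$-cycle forms a $T$-invariant triple, which is what makes the perimeter-contraction inequality collapse to $P\le\alpha P$. The only other thing to keep fixed is the convention that ``periodic point of period $2$'' means \emph{prime} period $2$, since that is what makes it coincide with hypothesis (i) of the cited theorem.
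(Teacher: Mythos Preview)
Your argument is correct. Both implications are handled cleanly: the ``if'' direction is an immediate application of the quoted theorem of \cite{petrov2} once hypothesis~(i) is identified with the absence of prime-period-$2$ points, and the ``only if'' direction is the short invariance-of-perimeter contradiction you give, which is valid since the triple $\{x^{\ast},a,b\}$ is pairwise distinct and $T$-invariant.

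The paper itself supplies no proof for this corollary; it simply records the statement with a citation to \cite{petrov2} and positions it as a consequence of the preceding material. From that placement the intended ``if'' direction is presumably the $n=3$ case of Corollary~1 (equivalently Theorem~2): a perimeter-contracting map has a periodic point of prime period in $\{1,2\}$, so if none of period~$2$ exists it must have a fixed point. Your route via Theorem~1 of the preliminaries is an equally direct alternative. For the ``only if'' direction, neither Theorem~1 nor Theorem~2 by itself excludes the coexistence of a fixed point and a $2$-cycle, so the direct perimeter argument you give (or something equivalent) is genuinely needed; you have filled in what the paper leaves implicit.
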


\begin{corollary}
\cite{petrov2} If $T$ multivalued contracting perimeters of triangles, then $%
T$ has a fixed point if and only if there are no periodic points of period 2.
\end{corollary}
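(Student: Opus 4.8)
The plan is to prove the two implications separately, using throughout the elementary observation that a multivalued mapping contracting the perimeters of triangles is \emph{precisely} a multivalued mapping contracting the total pairwise distances between $n=3$ points: the two defining inequalities coincide verbatim when $n=3$, since $S(Tx,Ty,Tz)=\delta(Tx,Ty)+\delta(Ty,Tz)+\delta(Tx,Tz)$. Consequently the existence theorem for the latter class, proved above, applies with $n=3$ (one uses here $|X|\ge 3$, which is implicit in the notion of contracting perimeters of triangles).

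For the implication ``no periodic point of period $2$'' $\Rightarrow$ ``$T$ has a fixed point'': since $T$ contracts the total pairwise distances between $3$ points, the quoted theorem yields a periodic point of prime period $k\in\{1,2\}$. By hypothesis $k\neq 2$, so $k=1$; that is, $T$ has a fixed point.

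For the converse ``$T$ has a fixed point'' $\Rightarrow$ ``no periodic point of period $2$'': assume, for a contradiction, that $x^{\ast}\in Tx^{\ast}$ and that there is a periodic point of period $2$, which we take (as in the theorem above) to mean a point of prime period $2$, i.e.\ distinct $a,b$ with $b\in Ta$, $a\in Tb$ and $a\notin Ta$. First I would rule out $x^{\ast}\in\{a,b\}$: if $x^{\ast}=a$ then $a=x^{\ast}\in Tx^{\ast}=Ta$, contradicting $a\notin Ta$, and the case $x^{\ast}=b$ is symmetric. Hence $x^{\ast},a,b$ are pairwise distinct, and applying the defining inequality to them gives
\[
\delta(Tx^{\ast},Ta)+\delta(Ta,Tb)+\delta(Tx^{\ast},Tb)\le\alpha\bigl(d(x^{\ast},a)+d(a,b)+d(x^{\ast},b)\bigr).
\]
Since $x^{\ast}\in Tx^{\ast}$, $b\in Ta$ and $a\in Tb$, and since $\delta(A,B)=\sup_{u\in A,\,v\in B}d(u,v)$ dominates $d(u,v)$ for any such $u,v$, the left-hand side is at least $d(x^{\ast},b)+d(a,b)+d(x^{\ast},a)$. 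Setting $P:=d(x^{\ast},a)+d(a,b)+d(x^{\ast},b)$, which is strictly positive because $a\neq b$, we obtain $P\le\alpha P$ with $\alpha<1$, a contradiction.

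The only genuinely delicate point is the degenerate case in the converse, where the fixed point could a priori coincide with one of the two points of the $2$-cycle; this is exactly where the requirement that the period be prime (equivalently, that a period-$2$ point not already be fixed) is used, and it is what guarantees a third point at which the three-point inequality can be invoked. Once the three points are known to be distinct the estimate is routine: each $\delta$-term bounds from below one side of the triangle on $x^{\ast},a,b$, and $\alpha<1$ closes the argument.
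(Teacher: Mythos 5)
Your first implication is the intended one: the paper offers no argument for this corollary, which is meant to be the $n=3$ specialization of its periodic-point theorem, and your observation that a multivalued mapping contracting perimeters of triangles is exactly a multivalued mapping contracting total pairwise distances between $3$ points (so that, assuming as one must that $X$ is complete and $|X|\ge 3$, there is a periodic point of prime period $k\in\{1,2\}$, hence a fixed point once period $2$ is excluded) is correct. The gap is in your converse, at the sentence ``the case $x^{\ast}=b$ is symmetric.'' It is not: prime period $2$ for $a$ gives $a\notin Ta$, $b\in Ta$, $a\in Tb$ and $a\neq b$, but it says nothing about whether $b\in Tb$. In the single-valued setting $Tb=a\neq b$ automatically prevents $b$ from being fixed, which is why the symmetry works in the cited result of Petrov; for a multivalued map $Tb$ may contain both $a$ and $b$, so the fixed point can sit inside the $2$-cycle and you cannot produce three pairwise distinct points to feed into the defining inequality.

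Moreover, this is not a removable difficulty: with the natural definitions the ``only if'' half fails. Let $X=\{a,b,c\}$ with $d(a,b)=1$ and $d(a,c)=d(b,c)=10$, a complete metric space, and set $Ta=\{b\}$, $Tb=\{a,b\}$, $Tc=\{b\}$. For the unique triple of distinct points,
\[
\delta(Ta,Tb)+\delta(Tb,Tc)+\delta(Ta,Tc)=1+1+0=2\le \frac{1}{2}\left(d(a,b)+d(b,c)+d(a,c)\right)=\frac{21}{2},
\]
so $T$ is a multivalued mapping contracting perimeters of triangles with $\alpha=\frac{1}{2}$. Here $b\in Tb$ is a fixed point, while $a\notin Ta$ and $a\in Tb=T^{2}a$, so $a$ is a periodic point of prime period $2$. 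Hence in the multivalued setting the existence of a fixed point does not exclude period-$2$ points: your forward implication stands, but the converse can only be rescued by strengthening the definition of a period-$2$ point (for instance, requiring that no point of the $2$-cycle be fixed), which is precisely the hypothesis your symmetry claim silently assumed.
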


\begin{corollary}
\cite{banach} A contraction mapping on a nonempty complete metric space has
a unique fixed point.
\end{corollary}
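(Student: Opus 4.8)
The plan is to deduce the statement as a special case of the single-valued corollary of Petrov on mappings contracting total pairwise distances, which appears just above. First I would observe that contracting the total pairwise distances between $n=2$ points is literally the contraction condition: since $S(x_{1},x_{2})=d(x_{1},x_{2})$, the inequality $S(Tx_{1},Tx_{2})\leq \alpha S(x_{1},x_{2})$ is exactly $d(Tx_{1},Tx_{2})\leq \alpha\, d(x_{1},x_{2})$. Hence every contraction mapping with constant $\alpha\in[0,1)$ contracts the total pairwise distances between $2$ points.

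For existence I would then split on cardinality. If $|X|=1$ the unique point of $X$ is trivially a fixed point. If $|X|\geq 2$, I apply the corollary \cite{petrov} with $n=2$ to the complete metric space $(X,d)$; it provides a periodic point of prime period $k\in\{1,\dots,n-1\}=\{1\}$, and a point of prime period $1$ is by definition a fixed point of $T$. Thus $T$ has a fixed point $x^{\ast}$ in all cases.

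For uniqueness, suppose $x^{\ast}$ and $y^{\ast}$ are fixed points with $x^{\ast}\neq y^{\ast}$. Then $d(x^{\ast},y^{\ast})=d(Tx^{\ast},Ty^{\ast})\leq \alpha\, d(x^{\ast},y^{\ast})$, so $(1-\alpha)\,d(x^{\ast},y^{\ast})\leq 0$; since $\alpha<1$ this forces $d(x^{\ast},y^{\ast})=0$, contradicting $x^{\ast}\neq y^{\ast}$. Therefore the fixed point is unique, which completes the argument.

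I do not expect a genuine obstacle here; the only points needing care are the boundary case $|X|=1$ (which lies outside the hypothesis $|X|\geq n$ of the cited corollary and is disposed of trivially) and the identification of ``prime period $1$'' with ``fixed point''. An alternative route would observe that a contraction also contracts the perimeters of triangles and automatically satisfies $T(Tx)\neq x$ whenever $Tx\neq x$ (else $d(x,Tx)=d(T(Tx),Tx)\leq\alpha\, d(Tx,x)<d(Tx,x)$), so the Petrov fixed-point theorem applies; but that version requires $|X|>3$ and would need $|X|\leq 3$ handled separately, making the $n=2$ total-pairwise-distance route preferable.
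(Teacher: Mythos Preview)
Your argument is correct and is exactly the derivation implied by the paper's placement of this corollary: the paper states the Banach principle (with citation) immediately after the single-valued Petrov corollary on total pairwise distances, and the intended deduction is precisely your specialization to $n=2$, where $S(x_{1},x_{2})=d(x_{1},x_{2})$ reduces the total-pairwise-distance condition to the ordinary contraction condition and the periodic point of prime period $1$ is a fixed point. Your handling of the boundary case $|X|=1$ and the uniqueness step simply fill in details the paper leaves implicit.
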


\section{Generalized Orbital Triangular Contractions}

\begin{definition}
\cite{popescu} Let $(X, d)$ be a metric space. A mapping $T: X \rightarrow X$
is a generalized orbital triangular contraction if there exists $\alpha \in
[0,1)$ such that: 
\begin{equation*}
d(Tx, T^{2}x) + d(T^{2}x, Ty) + d(Ty, Tx) \leq \alpha \left[ d(x, Tx) +
d(Tx, y) + d(y, x) \right],
\end{equation*}
for all $x, y \in X$ with $x \neq y \neq Tx$.
\end{definition}

\begin{definition}
Let $(X, d)$ be a metric space. A mapping $T: X \rightarrow CB(X)$ is a
multivalued generalized orbital triangular contraction if there
exists $\alpha \in [0,1)$ such that: 
\begin{equation*}
\delta(Tx, T^{2}x) + \delta(T^{2}x, Ty) + \delta(Ty, Tx) \leq \alpha \left[
d(x, Tx) + d(Tx, y) + d(x, y) \right],
\end{equation*}
for all $x, y \in X$ with $x \neq y \notin Tx$.
\end{definition}

\begin{theorem}
Let $(X, d)$ be a complete metric space and $T: X \rightarrow X$ a
multivalued generalized orbital triangular contraction with no periodic
points of prime period 2. Then $T$ has a fixed point. 
\end{theorem}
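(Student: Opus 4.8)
The plan is to argue by contradiction: suppose $T$ has no fixed point, so $x\notin Tx$ for every $x\in X$. Fix $x_{0}\in X$ and construct an orbit by choosing $x_{n+1}\in Tx_{n}$ for each $n$; since $x_{n}\notin Tx_{n}$, consecutive terms differ, $x_{n}\ne x_{n+1}$. The hypothesis that $T$ has no periodic point of prime period $2$ prevents $x_{n+2}=x_{n}$ (such an equality, together with $x_{n+1}\in Tx_{n}$, $x_{n}\in Tx_{n+1}$ and $x_{n}\ne x_{n+1}$, would exhibit a $2$-periodic pair). If the successive choices can moreover be made so that $x_{n+2}\notin Tx_{n}$, then the pair $x=x_{n}$, $y=x_{n+2}$ is admissible in the defining inequality of a multivalued generalized orbital triangular contraction.

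First I would prove that the perimeters of the orbital triangles contract. Set $P_{n}=d(x_{n},x_{n+1})+d(x_{n+1},x_{n+2})+d(x_{n},x_{n+2})$. Applying the contraction with $x=x_{n}$, $y=x_{n+2}$, and using the inclusions $x_{n+1}\in Tx_{n}$, $x_{n+2}\in T^{2}x_{n}$, $x_{n+3}\in Tx_{n+2}$ to bound each $\delta$-term on the left below by the corresponding distance between orbit points, while bounding the point-to-set terms on the right above by $d(x_{n},x_{n+1})$ and $d(x_{n+1},x_{n+2})$, one obtains $P_{n+1}\le\alpha P_{n}$. Hence $P_{n}\le\alpha^{n}P_{0}$, and in particular $d(x_{n},x_{n+1})\le\alpha^{n}P_{0}$.

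Next, the usual telescoping/geometric-series estimate gives $d(x_{m},x_{n})\le\frac{\alpha^{n}}{1-\alpha}P_{0}$ for $m>n$, so $\{x_{n}\}$ is Cauchy and, by completeness, converges to some $x_{\ast}\in X$. To finish I would show $x_{\ast}\in Tx_{\ast}$: apply the contraction with $x=x_{n}$, $y=x_{\ast}$ for infinitely many admissible $n$ (the orbit is never eventually constant, since $x_{n}=x_{n+1}$ would be a fixed point, so $x_{n}\ne x_{\ast}$ for infinitely many $n$). The right-hand side is at most $\alpha\big[d(x_{n},x_{n+1})+d(x_{n+1},x_{\ast})+d(x_{n},x_{\ast})\big]\to 0$, so every term on the left, in particular $\delta(Tx_{n},Tx_{\ast})$, tends to $0$. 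Since $x_{n+1}\in Tx_{n}$, for each fixed $b\in Tx_{\ast}$ we have $d(x_{n+1},b)\le\delta(Tx_{n},Tx_{\ast})\to 0$, so $b=\lim x_{n+1}=x_{\ast}$; thus $Tx_{\ast}=\{x_{\ast}\}$, contradicting the assumption that $T$ has no fixed point.

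The step I expect to be the main obstacle is justifying the side-condition ``$x\ne y\notin Tx$'' exactly where it is invoked: that the orbit can be chosen with $x_{n+2}\notin Tx_{n}$ at every stage, and that $x_{\ast}\notin Tx_{n}$ for infinitely many $n$. In Popescu's single-valued theorem the analogous applicability issues are settled via continuity of $T$, but here $T$ is merely multivalued and need not be continuous, so the degenerate configurations (for instance nested images $Tx_{n+1}\subseteq Tx_{n}$) must either be excluded by a mild extra regularity hypothesis or be disposed of by a separate case analysis.
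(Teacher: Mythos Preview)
Your approach is essentially identical to the paper's: both form a Picard orbit, apply the contractive inequality with $(x,y)=(x_{n-1},x_{n+1})$ (your $(x_n,x_{n+2})$ after an index shift) to obtain $P_{n}\le\alpha P_{n-1}$ and hence $d(x_n,x_{n+1})\le\alpha^{n}P_{0}$, deduce that $\{x_n\}$ is Cauchy via the geometric series, and then apply the inequality once more with $y=x_{\ast}$ to conclude $x_{\ast}\in Tx_{\ast}$.

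The admissibility issue you single out --- verifying $y\notin Tx$ each time the inequality is invoked --- is not actually settled in the paper either. The paper simply writes ``Assume $x_{n-1}\ne x_{n}\ne x_{n+1}\notin Tx_{n}$, $x_{n}\notin Tx_{n-1}$ for all $n$'' and proceeds; note that as written this even contradicts the construction $x_{n}\in Tx_{n-1}$, so it is a placeholder rather than a justification. Likewise, the final application with $y=x_{\ast}$ is made without checking $x_{\ast}\notin Tx_{n-1}$. So your caution about this point is well founded, but it is a gap shared with the paper's own proof rather than a divergence from it.
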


\begin{proof}
Let $x_{0} \in X$ and define the Picard iteration $x_{n+1} \in T x_{n}$.
Assume $x_{n-1} \neq x_{n} \neq x_{n+1} \notin T x_{n}, x_{n} \notin T
x_{n-1}$ for all $n$. Let $x = x_{n-1}, y = x_{n+1}$. Then, we have 
\begin{eqnarray*}
d(x_{n}, x_{n+1}) &\leq& d(x_{n}, x_{n+1}) + d(x_{n+1}, x_{n+2}) + d(x_{n},
x_{n+2}) \\
&\leq& \delta(T x_{n-1}, T^{2} x_{n-1}) + \delta(T^{2} x_{n-1}, T x_{n+1}) +
\delta(T x_{n-1}, T x_{n+1}) \\
&\leq& \alpha \left[ d(x_{n-1}, T x_{n-1}) + d(T x_{n-1}, x_{n+1}) +
d(x_{n-1}, x_{n+1}) \right] \\
&\leq& \alpha \left[ d(x_{n-1}, x_{n}) + d(x_{n}, x_{n+1}) + d(x_{n-1},
x_{n+1}) \right] \\
&\leq& \alpha^{n} \left[ d(x_{0}, x_{1}) + d(x_{1}, x_{2}) + d(x_{0}, x_{2}) %
\right] \\
&=& \alpha^{n} p
\end{eqnarray*}
where $p = d(x_{0}, x_{1}) + d(x_{1}, x_{2}) + d(x_{0}, x_{2})$.

For any positive integers $m,n$ with $m > n$, we have 
\begin{eqnarray*}
d(x_{m}, x_{n}) &\leq& \sum\limits_{k=n}^{m} d(x_{k}, x_{k+1}) \\
&\leq& \sum\limits_{k=n}^{m} \alpha^{k} p \\
&\leq& \alpha^{n} \sum\limits_{k=n}^{\infty} \alpha^{k-n} p \\
&=& \alpha^{n} \frac{1}{1 - \alpha} p
\end{eqnarray*}
which implies that $\left\{ x_{n} \right\}_{n \geq 0}$ is a Cauchy sequence
and hence converges in the complete metric space $X$. Let $\lim x_{n} =
x_{\ast}$. Then we have 
\begin{eqnarray*}
&& d(x_{n}, x_{n+1}) + d(x_{n+1}, T x_{\ast}) + d(x_{n}, T x_{\ast}) \\
&\leq& \delta(T x_{n-1}, T^{2} x_{n-1}) + \delta(T^{2} x_{n-1}, T x_{\ast})
+ \delta(T x_{n-1}, T x_{\ast}) \\
&\leq& \alpha \left[ d(x_{n-1}, T x_{n-1}) + d(T x_{n-1}, x_{\ast}) +
d(x_{n-1}, x_{\ast}) \right] \\
&\leq& \alpha \left[ d(x_{n-1}, x_{n}) + d(x_{n}, x_{\ast}) + d(x_{n-1},
x_{\ast}) \right].
\end{eqnarray*}
Then we have 
\begin{equation*}
d(x_{\ast}, T x_{\ast}) = 0.
\end{equation*}
\end{proof}

\begin{corollary}
\cite{popescu} Let $(X,d)$ be a complete metric space and $T:X\rightarrow X$
a generalized orbital triangular contraction with no periodic points of
prime period 2. Then $T$ has a unique fixed point.
\end{corollary}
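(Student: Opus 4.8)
The plan is to read off existence from the multivalued theorem just proved and then to settle uniqueness directly from the defining inequality, since the single-valued case is literally a specialization.

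\textbf{Existence.} First I would regard the single-valued map $T:X\to X$ as a multivalued map $\widehat T:X\to CB(X)$ via $\widehat T x=\{Tx\}$. Because $\delta(\{a\},\{b\})=d(a,b)$ for all $a,b$, and $\widehat T^{2}x=\{T^{2}x\}$, the single-valued generalized orbital triangular contraction inequality for $T$ is exactly the multivalued one for $\widehat T$, while the side condition $x\neq y\notin\widehat T x$ becomes $x\neq y\neq Tx$; also the hypothesis ``no periodic points of prime period $2$'' transfers verbatim. Hence the preceding theorem applies to $\widehat T$ and produces $x^{\ast}$ with $x^{\ast}\in\widehat T x^{\ast}=\{Tx^{\ast}\}$, i.e. $Tx^{\ast}=x^{\ast}$. (Equivalently, one may simply rerun the Picard argument of that proof with every $\delta$ replaced by $d$; it is the same computation.)

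\textbf{Uniqueness.} Suppose $x^{\ast}$ and $y^{\ast}$ are fixed points with $x^{\ast}\neq y^{\ast}$. The point to exploit is that the side condition only requires $x\neq y\neq Tx$, and in particular does \emph{not} require $x\neq Tx$; thus the choice $x=x^{\ast}$, $y=y^{\ast}$ is admissible, since $x^{\ast}\neq y^{\ast}$ and $y^{\ast}\neq x^{\ast}=Tx^{\ast}$. Substituting into
\[
d(Tx,T^{2}x)+d(T^{2}x,Ty)+d(Ty,Tx)\leq\alpha\big[d(x,Tx)+d(Tx,y)+d(y,x)\big]
\]
and using $Tx^{\ast}=T^{2}x^{\ast}=x^{\ast}$ and $Ty^{\ast}=y^{\ast}$ collapses the left side to $0+d(x^{\ast},y^{\ast})+d(y^{\ast},x^{\ast})=2d(x^{\ast},y^{\ast})$ and the right side to $\alpha\big[0+d(x^{\ast},y^{\ast})+d(y^{\ast},x^{\ast})\big]=2\alpha\,d(x^{\ast},y^{\ast})$. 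Hence $d(x^{\ast},y^{\ast})\leq\alpha\,d(x^{\ast},y^{\ast})$ with $\alpha\in[0,1)$, which forces $d(x^{\ast},y^{\ast})=0$, a contradiction. Therefore the fixed point is unique.

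\textbf{Main obstacle.} There is no genuine analytic difficulty; the only care needed is bookkeeping on the side conditions. One must check that the multivalued theorem really does specialize (so that $x\neq y\notin Tx$ reads $x\neq y\neq Tx$ on singletons), and, for uniqueness, notice that feeding in the two fixed points \emph{in the order $x=x^{\ast}$, $y=y^{\ast}$} is exactly what keeps us inside the region where the contraction is assumed—had we tried a choice with $y=Tx$ the inequality would be unavailable. So the argument does hinge on this admissible substitution rather than on any estimate.
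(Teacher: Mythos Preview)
Your argument is correct. The paper itself does not give a proof of this corollary at all: it is stated with a citation to \cite{popescu}, and the implicit content is simply that existence is the single-valued specialization of the multivalued theorem immediately preceding it. That is precisely your existence step, so on that point you and the paper agree verbatim.

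Where you go beyond the paper is uniqueness. The multivalued theorem only asserts existence of a fixed point, so uniqueness in the corollary does not drop out of the specialization; the paper defers this to the cited source rather than proving it. Your direct substitution $x=x^{\ast}$, $y=y^{\ast}$ into the defining inequality, together with the observation that the side condition $x\neq y\neq Tx$ is met (it does not require $x\neq Tx$), is exactly the right device and yields $2d(x^{\ast},y^{\ast})\le 2\alpha\,d(x^{\ast},y^{\ast})$, hence $x^{\ast}=y^{\ast}$. So your proof is self-contained in a way the paper's treatment of the corollary is not, while remaining fully compatible with its approach.
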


\section{Generalized Orbital Triangular Kannan Contractions}

\begin{definition}
\cite{popescu} A mapping $T:X\rightarrow X$ is a generalized orbital
triangular Kannan contraction if there exists $\beta \in \left[ 0,\frac{2}{3}%
\right) $ such that: 
\begin{equation*}
d(Tx,T^{2}x)+d(T^{2}x,Ty)+d(Ty,Tx)\leq \beta \left[
d(x,Tx)+d(y,Ty)+d(Tx,T^{2}x)\right],
\end{equation*}
for all $x,y\in X$ with $x,y,Tx$ pairwise distinct.
\end{definition}

\begin{definition}
A mapping $T:X\rightarrow CB(X)$ is a multivalued generalized orbital
triangular Kannan contraction if there exists $\beta \in \left[ 0,\frac{2}{3}%
\right) $ such that: 
\begin{equation*}
\delta (Tx,T^{2}x)+\delta (T^{2}x,Ty)+\delta (Ty,Tx)\leq \beta \left[
d(x,Tx)+d(y,Ty)+\delta (Tx,T^{2}x)\right],
\end{equation*}
for all $x,y\in X$ with $x\neq y$ and $x,y\notin Tx$ pairwise distinct.
\end{definition}

\begin{remark}
A mapping $T:X\rightarrow CB(X)$ is a multivalued generalized orbital
triangular Kannan contraction satisfies 
\begin{equation*}
(1-\beta )\delta (Tx,T^{2}x)+\delta (T^{2}x,Ty)+\delta (Ty,Tx)\leq \beta 
\left[ d(x,Tx)+d(y,Ty)\right].
\end{equation*}
\end{remark}

\begin{theorem}
Let $(X, d)$ be a complete metric space and $T: X \rightarrow X$ a multivalued generalized orbital triangular
Kannan contraction with no periodic points of prime period 2. Then $T$ has a fixed point. 
\end{theorem}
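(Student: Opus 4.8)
The plan is to follow the template of the proof of the multivalued generalized orbital triangular contraction theorem given above, with one genuinely nonroutine modification forced by the fact that the Kannan constant may reach $\frac{2}{3}$ rather than only $\frac{1}{2}$, so that a direct geometric estimate on consecutive distances is no longer available.

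First I would fix $x_0\in X$ and construct a Picard orbit $x_{n+1}\in Tx_n$. If some iterate satisfies $x_n\in Tx_n$ we have a fixed point and are done; if the orbit ever closes into a $2$-cycle (a pair $x_{n-1}\ne x_n$ with $x_{n-1}\in Tx_n$ and $x_n\in Tx_{n-1}$) that is a periodic point of prime period $2$, contradicting the hypothesis. Hence we may assume that for every $n\ge 1$ the points $x_{n-1},x_n,x_{n+1}$ are pairwise distinct and $x_n\notin Tx_n$, which is precisely what licenses the use of the contraction inequality along the orbit. I would also replace the defining inequality by the equivalent form recorded in the Remark, $(1-\beta)\,\delta(Tx,T^2x)+\delta(T^2x,Ty)+\delta(Ty,Tx)\le\beta\bigl[d(x,Tx)+d(y,Ty)\bigr]$.

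Writing $a_n=d(x_n,x_{n+1})$ and applying this inequality with $x=x_{n-1}$, $y=x_{n+1}$, I would use the memberships $x_n\in Tx_{n-1}$, $x_{n+1}\in T^2x_{n-1}=T(Tx_{n-1})$, $x_{n+2}\in Tx_{n+1}$ to bound the left side below by $(1-\beta)a_n+a_{n+1}+d(x_n,x_{n+2})$, and the trivial bounds $d(x_{n-1},Tx_{n-1})\le a_{n-1}$, $d(x_{n+1},Tx_{n+1})\le a_{n+1}$ to bound the right side above by $\beta a_{n-1}+\beta a_{n+1}$. Rearranging and discarding the nonnegative term $d(x_n,x_{n+2})$ gives the basic estimate $(1-\beta)(a_n+a_{n+1})\le\beta a_{n-1}$ for all $n\ge 1$. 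Here is the one nonroutine point: since $\frac{\beta}{1-\beta}$ need not be $<1$, I would not iterate this pointwise but instead $\emph{sum}$ it over $n=1,\dots,N$. Collecting the telescoped sums and using that the partial sums $\sigma_N=\sum_{n=0}^{N}a_n$ are nondecreasing, one obtains $(2-3\beta)\,\sigma_N\le(1-\beta)(2a_0+a_1)$; as $\beta<\frac{2}{3}$ the factor $2-3\beta$ is positive, so $\sigma_N$ is bounded, $\sum_n a_n<\infty$, the sequence $\{x_n\}$ is Cauchy, and by completeness $x_n\to x_\ast$ for some $x_\ast\in X$.

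To finish I would apply the Remark-form inequality with $x=x_{n-1}$, $y=x_\ast$ (legitimate for large $n$ unless the orbit is eventually $x_\ast$, in which case $x_\ast\in Tx_\ast$ directly): dropping the nonnegative terms $(1-\beta)a_n$ and $\delta(T^2x_{n-1},Tx_\ast)$ on the left while keeping $d(x_n,Tx_\ast)\le\delta(Tx_{n-1},Tx_\ast)$, and bounding the right side by $\beta a_{n-1}+\beta\,d(x_\ast,Tx_\ast)$, I get $d(x_n,Tx_\ast)\le\beta a_{n-1}+\beta\,d(x_\ast,Tx_\ast)$. Since $y\mapsto d(y,Tx_\ast)$ is $1$-Lipschitz and $a_{n-1}\to 0$, letting $n\to\infty$ yields $(1-\beta)\,d(x_\ast,Tx_\ast)\le 0$, hence $d(x_\ast,Tx_\ast)=0$, and as $Tx_\ast$ is closed this means $x_\ast\in Tx_\ast$. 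The step I expect to be the real obstacle is the summation argument: the ``$a_n\le c\,a_{n-1}$ with $c<1$'' reasoning that works for the non-Kannan version simply fails for $\beta\in[\frac{1}{2},\frac{2}{3})$, so convergence of the orbit must be extracted from convergence of the series $\sum a_n$ rather than from geometric decay of its terms; a lesser nuisance is the bookkeeping of the side conditions $x\ne y$, $x,y\notin Tx$ needed to invoke the contraction inequality at each stage, handled as above by noting that any violation produces either a fixed point or an excluded prime-period-$2$ point.
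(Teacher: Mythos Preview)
Your argument is correct, but it diverges from the paper precisely at the point you flag as ``nonroutine''. From $(1-\beta)(a_n+a_{n+1})+d(x_n,x_{n+2})\le\beta a_{n-1}$ you \emph{discard} the cross term $d(x_n,x_{n+2})$ and, since $\beta/(1-\beta)$ may exceed $1$, you abandon pointwise decay and instead sum the inequalities to obtain $(2-3\beta)\sigma_N\le(1-\beta)(2a_0+a_1)$, whence $\sum_n a_n<\infty$. The paper instead \emph{keeps} the cross term and uses the reverse triangle inequality $d(x_n,x_{n+2})\ge|a_n-a_{n+1}|$; a short case split on the sign of $a_n-a_{n+1}$ then yields a genuine geometric estimate $a_{n+1}\le\dfrac{\beta}{2-2\beta}\,a_n$ (the paper writes $\beta/(2-\beta)$, but either ratio is $<1$ exactly when $\beta<2/3$), after which the Cauchy argument is the standard one. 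So your assertion that ``the $a_n\le c\,a_{n-1}$ reasoning simply fails for $\beta\in[1/2,2/3)$'' is not accurate: it fails only if one throws away $d(x_n,x_{n+2})$, which the paper does not. Your summation trick is a clean alternative that avoids the case split and gives summability of $\{a_n\}$ directly, at the cost of losing the explicit geometric rate; the paper's route gives the sharper pointwise decay but needs the extra reverse-triangle-inequality step. Your endpoint argument, bounding $d(x_n,Tx_\ast)\le\delta(Tx_{n-1},Tx_\ast)$ and passing to the limit to get $(1-\beta)d(x_\ast,Tx_\ast)\le 0$, matches the paper's in spirit (the paper keeps more terms and obtains $(2-\beta)d(x_\ast,Tx_\ast)\le 0$, but the conclusion is the same).
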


\begin{proof}
Let $x_{0}\in X$ and define the Picard iteration $x_{n+1}\in Tx_{n}$. Assume 
$x_{n}\neq x_{n+1}\neq x_{n+2}$ for all $n$. Let $x=x_{n},y=x_{n+2}$. Then
we have 
\begin{eqnarray*}
&&(1-\beta )d(x_{n+1},x_{n+2})+d(x_{n+2},x_{n+3})+d(x_{n+3},x_{n+1}) \\
&\leq &(1-\beta )\delta (Tx_{n},T^{2}x_{n})+\delta
(T^{2}x_{n},Tx_{n+2})+\delta (Tx_{n+2},Tx_{n}) \\
&\leq &\beta \left[ d(x_{n},Tx_{n})+d(x_{n+2},Tx_{n+2})\right]  \\
&\leq &\beta \left[ d(x_{n},x_{n+1})+d(x_{n+2},x_{n+3})\right] ,
\end{eqnarray*}%
which implies that 
\begin{equation*}
(1-\beta )\left( d(x_{n+1},x_{n+2})+d(x_{n+2},x_{n+3})\right)
+d(x_{n+3},x_{n+1})\leq \beta d(x_{n},x_{n+1}).
\end{equation*}%
And we have 
\begin{eqnarray*}
(1-\beta )\left( d(x_{n+1},x_{n+2})+d(x_{n+2},x_{n+3})\right) +\left\vert
d(x_{n+2},x_{n+1})-d(x_{n+2},x_{n+3})\right\vert  &\leq & \\
(1-\beta )\left( d(x_{n+1},x_{n+2})+d(x_{n+2},x_{n+3})\right)
+d(x_{n+3},x_{n+1}) &\leq &\beta d(x_{n},x_{n+1}).
\end{eqnarray*}%
If $d(x_{n+2},x_{n+1})-d(x_{n+2},x_{n+3})\geq 0$ then we have 
\begin{equation*}
(2-\beta )d(x_{n+1},x_{n+2})-\beta d(x_{n+2},x_{n+3})\leq \beta
d(x_{n},x_{n+1}),
\end{equation*}%
which implies 
\begin{equation*}
d(x_{n+1},x_{n+2})\leq \frac{\beta }{2-\beta }d(x_{n},x_{n+1}).
\end{equation*}

If $d(x_{n+2},x_{n+3})-d(x_{n+2},x_{n+1})>0$ then we have 
\begin{equation*}
(2-\beta )d(x_{n+1},x_{n+2})\leq (2-\beta )d(x_{n+2},x_{n+3})\leq \beta
(d(x_{n},x_{n+1})+d(x_{n+2},x_{n+1})),
\end{equation*}
which implies 
\begin{equation*}
d(x_{n+1},x_{n+2})\leq \frac{\beta }{2-\beta }d(x_{n},x_{n+1}).
\end{equation*}
Therefore, if we let $\alpha =\frac{\beta }{2-\beta }<1$ we have 
\begin{eqnarray*}
d(x_{n+1},x_{n+2}) &\leq &\alpha d(x_{n},x_{n+1}) \\
&&\dots \\
&\leq &\alpha ^{n+1}d(x_{0},x_{1}).
\end{eqnarray*}
For any positive integers $m,n$ with $m>n$ we have 
\begin{eqnarray*}
d(x_{m},x_{n}) &\leq &\sum\limits_{k=n}^{m}d(x_{k},x_{k+1}) \\
&\leq &\sum\limits_{k=n}^{\infty }\alpha ^{k+1}d(x_{0},x_{1}) \\
&\leq &\alpha ^{n+1}\sum\limits_{k=n}^{\infty }\alpha ^{k-n}d(x_{0},x_{1}) \\
&=&\alpha ^{n+1}\frac{1}{1-\alpha }d(x_{0},x_{1}),
\end{eqnarray*}
which implies that $\left\{ x_{n}\right\} _{n\geq 0}$ is a Cauchy sequence
and it converges in the complete metric space $X$. Let us say $\lim
x_{n}=x_{\ast}$. Then we have 
\begin{eqnarray*}
&&(1-\beta )d(x_{n+1},x_{n+2})+d(x_{n+2},Tx_{\ast})+d(Tx_{\ast},x_{n+1}) \\
&\leq &(1-\beta )\delta (Tx_{n},T^{2}x_{n})+\delta (T^{2}x_{n},Tx_{\ast})
+\delta (Tx_{\ast},Tx_{n}) \\
&\leq &\beta \left[ d(x_{n},Tx_{n})+d(x_{\ast},Tx_{\ast})\right] \\
&\leq &\beta \left[ d(x_{n},x_{n+1})+d(x_{\ast},Tx_{\ast})\right].
\end{eqnarray*}
Then we have 
\begin{equation*}
(2-\beta )d(x_{\ast},Tx_{\ast})\leq 0.
\end{equation*}

Hence $x_{\ast}\in Tx_{\ast}.$

%\textbf{Assume that }$y_{\ast }\in Tx_{\ast }$\textbf{\ is another point,
%then we have}%
%\begin{eqnarray*}
%d(x_{\ast },Ty_{\ast }) &\leq &(1-\beta )\delta (Tx_{\ast },T^{2}x_{\ast
%})+\delta (T^{2}x_{\ast },Ty_{\ast })+\delta (Ty_{\ast },Tx_{\ast }) \\
%&\leq &\beta \left[ d(x_{\ast },Tx_{\ast })+d(y_{\ast },Ty_{\ast })\right] 
%\\
%&\leq &\beta \left[ d(y_{\ast },x_{\ast })+d(x_{\ast ,}Ty_{\ast })\right] 
%\end{eqnarray*}%
%\begin{equation*}
%d(x_{\ast },y_{\ast })\leq d(x_{\ast },y_{\ast })\implies x_{\ast }=y_{\ast
%},
%\end{equation*}%
%\textbf{which means }$Tx_{\ast }=\{x_{\ast }\}.$
%
%\textbf{If we have another fixed point }$y_{\ast }\in Ty_{\ast }=\{y_{\ast
%}\}$\textbf{, then}%
%\begin{eqnarray*}
%&&2d(x_{\ast },y_{\ast })\leq d(x_{\ast },Tx_{\ast })+d(Tx_{\ast },y_{\ast
%})+d(x_{\ast },y_{\ast }) \\
%&\leq &\delta (Tx_{\ast },T^{2}x_{\ast })+\delta (T^{2}x_{\ast },Ty_{\ast
%})+\delta (Tx_{\ast },Ty_{\ast }) \\
%&\leq &\alpha \left[ d(x_{\ast },x_{\ast })+d(x_{\ast },y_{\ast })+d(x_{\ast
%},y_{\ast })\right] 
%\end{eqnarray*}%
%\textbf{means }$2d(x_{\ast },y_{\ast })\leq 2\alpha d(x_{\ast },y_{\ast
%})\implies x_{\ast }=y_{\ast }.$
\end{proof}

\begin{corollary}
\cite{popescu} In a complete metric space, every generalized orbital
triangular Kannan contraction without periodic points of period 2 has a
fixed point.
\end{corollary}

\section{Generalized Orbital Triangular Chatterjea Contractions}

\begin{definition}
\cite{popescu} A mapping $T:X\rightarrow X$ is a generalized orbital
triangular Chatterjea contraction if there exists $\gamma \in \left( 0,\frac{%
1}{2}\right) $ such that: 
\begin{eqnarray*}
&&d(Tx,T^{2}x)+d(T^{2}x,Ty)+d(Ty,Tx) \\
&\leq &\gamma \left[ d(x,Ty)+d(y,Tx)+d(x,T^{2}x)+d(y,T^{2}x)+d(Tx,Ty)\right]
.
\end{eqnarray*}
for all $x,y\in X$ with $x\neq y\notin Tx$ pairwise distinct.
\end{definition}

\begin{definition}
A mapping $T:X\rightarrow CB(X)$ is a multivalued generalized orbital
triangular Chatterjea contraction if there exists $\gamma \in \left( 0,\frac{%
1}{2}\right) $ such that: 
\begin{eqnarray*}
&&\delta (Tx,T^{2}x)+\delta (T^{2}x,Ty)+\delta (Ty,Tx) \\
&\leq &\gamma \left[ d(x,Ty)+d(y,Tx)+d(x,T^{2}x)+d(y,T^{2}x)+\delta (Tx,Ty)%
\right] .
\end{eqnarray*}
\end{definition}

\begin{remark}
A mapping $T:X\rightarrow CB(X)$ is a multivalued generalized orbital
triangular Chatterjea contraction if it satisfies the following: 
\begin{eqnarray*}
&&\delta (Tx,T^{2}x)+\delta (T^{2}x,Ty)+(1-\gamma )\delta (Ty,Tx) \\
&\leq &\gamma \left[ d(x,Ty)+d(y,Tx)+d(x,T^{2}x)+d(y,T^{2}x)\right] .
\end{eqnarray*}
\end{remark}

\begin{theorem}
Let $(X, d)$ be a complete metric space and $T: X \rightarrow X$ a multivalued generalized orbital triangular
Chatterjea contraction with no periodic points of prime period 2. Then $T$ has a fixed point. 
\end{theorem}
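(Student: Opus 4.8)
The plan is to follow the pattern of the preceding (Kannan) proof: build a Picard-type orbit, extract a one-step geometric estimate from the Remark, show the orbit is Cauchy, and identify its limit as a fixed point. So I would fix $x_0\in X$ and choose $x_{n+1}\in Tx_n$ for all $n$. If at some stage $x_n\in Tx_n$ we are done; otherwise $x_n\neq x_{n+1}$, and since $T$ has no periodic point of prime period $2$ the points $x_{n-1},x_n,x_{n+1}$ are pairwise distinct for every $n$ (the remaining ``not in $Tx$'' requirements being handled as in the earlier proofs). I record that $x_{n+1}\in Tx_n\subseteq T^2x_{n-1}$ and $x_{n+2}\in Tx_{n+1}\subseteq T^2x_n$; in particular $d(x_{n+1},T^2x_{n-1})=0$.

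I would then apply the Remark with $x=x_{n-1}$ and $y=x_{n+1}$. On the left, each $\delta$-term dominates the corresponding distance of orbit points; on the right, each point-to-set term is dominated by a distance of orbit points (e.g.\ $d(x_{n+1},Tx_{n-1})\le d(x_{n+1},x_n)$ and $d(x_{n-1},T^2x_{n-1})\le d(x_{n-1},x_{n+1})$, while $d(x_{n+1},T^2x_{n-1})=0$ drops out). Writing $a_k:=d(x_k,x_{k+1})$ this yields
\[
a_n+a_{n+1}+(1-\gamma)\,d(x_n,x_{n+2})\ \le\ \gamma\bigl[d(x_{n-1},x_{n+2})+a_n+d(x_{n-1},x_{n+1})\bigr].
\]
Bounding the skip-distances by $d(x_{n-1},x_{n+2})\le a_{n-1}+d(x_n,x_{n+2})$ and $d(x_{n-1},x_{n+1})\le a_{n-1}+a_n$ and rearranging, one gets $(1-2\gamma)a_n+a_{n+1}+(1-2\gamma)d(x_n,x_{n+2})\le 2\gamma\,a_{n-1}$; since $\gamma<\frac{1}{2}$ the left side is a sum of nonnegative terms, so $a_{n+1}\le 2\gamma\,a_{n-1}$. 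As $2\gamma<1$, the sequences $(a_{2k})$ and $(a_{2k+1})$ decay geometrically, hence $\sum_n a_n\le (a_0+a_1)/(1-2\gamma)<\infty$, so $\{x_n\}$ is Cauchy and converges to some $x_\ast\in X$ by completeness.

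To finish I would apply the Remark with $x=x_n$, $y=x_\ast$ for large $n$ (if $x_n=x_\ast$ at some stage then $d(x_\ast,Tx_\ast)\le d(x_\ast,x_{n+1})\to0$ directly). Estimating the left side below by $a_{n+1}+d(x_{n+2},Tx_\ast)+(1-\gamma)d(x_{n+1},Tx_\ast)$ and the right side above by $\gamma\bigl[\,d(x_n,x_\ast)+d(x_\ast,Tx_\ast)+d(x_\ast,x_{n+1})+d(x_n,x_{n+2})+d(x_\ast,x_{n+2})\,\bigr]$, and letting $n\to\infty$ (using $x_n,x_{n+1},x_{n+2}\to x_\ast$ and the $1$-Lipschitz continuity of $p\mapsto d(p,Tx_\ast)$), all distances between orbit terms and $x_\ast$ vanish and there remains $(2-\gamma)\,d(x_\ast,Tx_\ast)\le\gamma\,d(x_\ast,Tx_\ast)$, i.e.\ $(2-2\gamma)\,d(x_\ast,Tx_\ast)\le0$. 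Since $\gamma<\frac{1}{2}$ this forces $d(x_\ast,Tx_\ast)=0$, and as $Tx_\ast$ is closed, $x_\ast\in Tx_\ast$.

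The real work is in the middle step: orienting every $\delta$-estimate and every point-to-set estimate so that the skip-distances $d(x_{n-1},x_{n+2})$ and $d(x_{n-1},x_{n+1})$ get absorbed and the clean coefficient $2\gamma$ (rather than something $\ge1$) emerges — it is the identity $d(x_{n+1},T^2x_{n-1})=0$ that makes $\gamma<\frac{1}{2}$ suffice. A secondary point needing care is verifying that the ``$x\neq y$, $x,y\notin Tx$ pairwise distinct'' hypothesis is legitimately available for the orbit pairs $(x_{n-1},x_{n+1})$ and for $(x_n,x_\ast)$ in the limit step.
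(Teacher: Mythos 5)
Your proof is correct and follows essentially the same route as the paper's: Picard orbit, the rearranged Chatterjea inequality applied to orbit points (the paper takes $x=x_n,\ y=x_{n+2}$, you take the shifted pair $x=x_{n-1},\ y=x_{n+1}$, in both cases exploiting that the term $d(y,T^2x)$ vanishes along the orbit), geometric decay of the step lengths, the Cauchy/limit argument, and the identical concluding estimate $(2-2\gamma)\,d(x_\ast,Tx_\ast)\le 0$ with closedness of $Tx_\ast$. The only deviation is cosmetic: the paper retains the full perimeter-type quantity and contracts it with ratio $\gamma/(1-\gamma)$, whereas you absorb the skip distances to get the two-step recursion $a_{n+1}\le 2\gamma\,a_{n-1}$; both give summability and the same conclusion.
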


\begin{proof}
Let $x_{0}\in X$ and define the Picard iteration $x_{n+1}\in Tx_{n}$. Assume 
$x_{n}\neq x_{n+1}\neq x_{n+2}$ for all $n$. Let $x=x_{n},y=x_{n+2}$. Then
we have: 
\begin{eqnarray*}
&&d(x_{n+1},x_{n+2})+\delta (x_{n+2},x_{n+3})+(1-\gamma )d(x_{n+3},x_{n+1})
\\
&\leq &\delta (Tx_{n},T^{2}x_{n})+\delta (T^{2}x_{n},Tx_{n+2})+(1-\gamma
)\delta (Tx_{n+2},Tx_{n}) \\
&\leq &\gamma \left[ 
\begin{array}{c}
d(x_{n},Tx_{n+2})+d(x_{n+2},Tx_{n}) \\ 
+d(x_{n},T^{2}x_{n})+d(x_{n+2},T^{2}x_{n})%
\end{array}%
\right] \\
&\leq &\gamma \left[ 
\begin{array}{c}
d(x_{n},x_{n+3})+d(x_{n+2},x_{n+1}) \\ 
+d(x_{n},x_{n+2})%
\end{array}%
\right]
\end{eqnarray*}%
which implies that: 
\begin{eqnarray*}
&&d(x_{n+1},x_{n+2})+\delta (x_{n+2},x_{n+3})+d(x_{n+3},x_{n+1}) \\
&\leq &\delta (Tx_{n},T^{2}x_{n})+\delta (T^{2}x_{n},Tx_{n+2})+(1-\gamma
)\delta (Tx_{n+2},Tx_{n}) \\
&\leq &\gamma \left[ 
\begin{array}{c}
d(x_{n},Tx_{n+2})+d(x_{n+2},Tx_{n}) \\ 
+d(x_{n},T^{2}x_{n})+d(x_{n+2},T^{2}x_{n})%
\end{array}%
\right] \\
&\leq &\gamma \left[ 
\begin{array}{c}
d(x_{n},x_{n+3})+d(x_{n+2},x_{n+1}) \\ 
+d(x_{n},x_{n+2})+d(x_{n+3},x_{n+1})%
\end{array}%
\right] \\
&\leq &\gamma \left[ 
\begin{array}{c}
d(x_{n},x_{n+1})+d(x_{n+2},x_{n+1})+d(x_{n},x_{n+2}) \\ 
+d(x_{n+1},x_{n+2})+\delta (x_{n+2},x_{n+3})+d(x_{n+3},x_{n+1})%
\end{array}%
\right]
\end{eqnarray*}%
Therefore, if we let $\alpha =\frac{\gamma }{1-\gamma }<1$, we have 
\begin{eqnarray*}
d(x_{n+1},x_{n+2})+\delta (x_{n+2},x_{n+3})+d(x_{n+3},x_{n+1}) &\leq &\alpha 
\left[ d(x_{n},x_{n+1})+d(x_{n+2},x_{n+1})+d(x_{n},x_{n+2})\right] \\
&&\cdots \\
&\leq &\alpha ^{n+1}\left[ d(x_{0},x_{1})+d(x_{2},x_{1})+d(x_{0},x_{2})%
\right]
\end{eqnarray*}%
Let $p=\left[ d(x_{0},x_{1})+d(x_{2},x_{1})+d(x_{0},x_{2})\right] $. For any
positive integers $m,n$ with $m>n$ we have 
\begin{eqnarray*}
d(x_{m},x_{n}) &\leq &\sum\limits_{k=n}^{m}d(x_{k},x_{k+1}) \\
&\leq &\sum\limits_{k=n}^{m}\alpha ^{k+1}p \\
&\leq &\alpha ^{n+1}\sum\limits_{k=n}^{\infty }\alpha ^{k-n}p \\
&=&\alpha ^{n+1}\frac{1}{1-\alpha }p
\end{eqnarray*}%
which implies that $\left\{ x_{n}\right\} _{n\geq 0}$ is a Cauchy sequence
and it converges in the complete metric space $X$. Let us say $\lim
x_{n}=x_{\ast }$. Then we have 
\begin{eqnarray*}
&&d(x_{n+1},x_{n+2})+d(x_{n+2},Tx_{\ast })+(1-\gamma )d(Tx_{\ast },x_{n+1})
\\
&\leq &\delta (Tx_{n},T^{2}x_{n})+\delta (T^{2}x_{n},Tx_{\ast })+(1-\gamma
)\delta (Tx_{\ast },Tx_{n}) \\
&\leq &\gamma \left[ d(x_{n},Tx_{\ast })+d(x_{\ast
},Tx_{n})+d(x_{n},T^{2}x_{n})+d(x_{\ast },T^{2}x_{n})\right] \\
&\leq &\gamma \left[ d(x_{n},Tx_{\ast })+d(x_{\ast
},x_{n+1})+d(x_{n},x_{n+2})+d(x_{\ast },x_{n+2})\right]
\end{eqnarray*}%
Then we have 
\begin{equation*}
(2-2\gamma )d(x_{\ast },Tx_{\ast })\leq 0.
\end{equation*}%
Hence $x_{\ast }\in Tx_{\ast }$.
\end{proof}

\begin{corollary}
\cite{popescu} In a complete metric space, every generalized orbital
triangular Kannan contraction without periodic points of period 2 has a
fixed point.
\end{corollary}

\end{document}